\newtheorem{theorem}{Theorem}
\theoremstyle{plain}
\newtheorem{acknowledgement}{Acknowledgement}
\newtheorem{definition}{Definition}
\newtheorem{lemma}{Lemma}
\numberwithin{equation}{section}
\begin{document}
\title[Kirchhoff elliptic systems with variable parameters]{Existence result
for Kirchhoff elliptic system with variable parameters}
\subjclass{35J60 35B30 35B40}
\keywords{Kirchhoff elliptic systems, Existence , Positive solutions,
Sub-supersolution, Multiple parameters}
\dedicatory{.}

\begin{abstract}
The paper deals with the study of the existence result for a Kirchhoff
elliptic system with additive right hand side and variable parameters by
using the sub-super solutions method. Our study is the second result of our previous once in (Math. Methods Appl. Sci. 41 (2018),
5203-5210).
\end{abstract}

\author{Mohamed Haiour$^{1}$, Youcef Bouizem$^{4}$, Salah Boulaaras$^{2,3}$,
and Rafik Guefaifia$^{\text{3}}$}
\address{$^{\text{1}}$Department of Mathematics, Faculty of Sciences,
University of Annaba 12002, Algeria\\
$^{\text{2}}$Department of Mathematics, College of Sciences and Arts,
Al-Rass,\\
Qassim University, Kingdom of Saudi Arabia. \\
$^{\text{3}}$Laboratory of Fundamental and Applied Mathematics of Oran
(LMFAO), University of Oran 1, Ahmed Benbella. Algeria.\\
$^{\text{4}}$Department of Mathematics, Faculty of Mathematics and
Informatics, USTOMB, Oran, Algeria\\
}
\thanks{}
\maketitle

\section{Introduction}

Consider the following system
\begin{equation}
\bigskip \left\{
\begin{array}{l}
-A\left( \int\limits_{\Omega }\left\vert \nabla u\right\vert ^{2}dx\right)
\triangle u= \alpha \left( x\right) f\left( v\right) +\beta \left( x\right)
g\left( u\right) \text{ in }\Omega , \\
\\
-B\left( \int\limits_{\Omega }\left\vert \nabla v\right\vert ^{2}dx\right)
\triangle v=\gamma \left( x\right) h\left( u\right) +\eta \left( x\right) l
\left( v\right) \text{ in }\Omega , \\
\\
u=v=0\text{ on }\partial \Omega ,%
\end{array}%
\right.  \label{1.1}
\end{equation}%
where $\Omega \subset
%TCIMACRO{\U{211d} }%
%BeginExpansion
\mathbb{R}
%EndExpansion
^{N}$ \ $\left( N\geq 3\right) $ is a bounded smooth domain with $C^{2}$
boundary $\partial \Omega ,$ and $A,$ $B$ :$%
%TCIMACRO{\U{211d} }%
%BeginExpansion
\mathbb{R}
%EndExpansion
^{+}\rightarrow
%TCIMACRO{\U{211d} }%
%BeginExpansion
\mathbb{R}
%EndExpansion
^{+}$ are continuous functions with further conditions to be given later, $%
\alpha ,\beta ,\gamma ,\eta \in C\left( \overline{\Omega }\right)$.

This nonlocal problem originates from the stationary version of Kirchhoff's
work \cite{7} in 1883
\begin{equation}
\rho \frac{\partial ^{2}u}{\partial t^{2}}-\left( \frac{P_{0}}{h}+\frac{E}{2L%
}\int\limits_{0}^{L}\left\vert \frac{\partial u}{\partial x}\right\vert
^{2}dx\right) \frac{\partial ^{2}u}{\partial x^{2}}=0,  \label{1.2}
\end{equation}

where Kirchhoff extended the classical d'Alembert's wave equation by
considering the effect of the changes in the length of the string during
vibrations. The parameters in (\ref{1.2}) have the following meanings: $L$
is the length of the string, $h$ is the area of the cross-section, $E$ \ is
the Young modulus of the material, $\rho $ is the mass density, and $P_{0}$
is the initial tension.

Recently, The problems associated to Laplacian operator and Kirchhoff
elliptic equations have been heavily studied, we refer to \cite{1}, \cite{X}%
, \cite{2}, \cite{3}, \cite{log}, \cite{Y}, \cite{Z}, \cite{W}, \cite{11}
and \cite{12}.

In \cite{1}, Alves and Correa proved the validity of Sub-super solutions
method for problems of Kirchhoff class involving a single equation and a
boundary condition
\begin{equation*}
\left\{
\begin{array}{l}
-M\left( \Vert u\Vert ^{2}\right) \Delta u=f\left( x,u\right) \text{ in }%
\Omega  \\
u=0\text{ on }\partial \Omega ,%
\end{array}%
\right.
\end{equation*}%
with $f\in C\left( \overline{\Omega }\times \mathbb{R}\right) $. \newline
By using a comparison principle that requires $M$ to be non-negative and
non-increasing in $\left[ 0,+\infty \right) $, with $H\left( t\right)
:=M\left( t^{2}\right) t$ increasing and $H\left( \mathbb{R}\right) =\mathbb{%
R}$, they managed to prove the existence of positive solutions assuming $f$
increasing in the variable $u$ for each $x\in \Omega $ fixed.

For systems involving similar class of equations, this result can not be
used directly, i.e. the existence of a subsolution and a supersolution does
not guarantee the existence of the solution. Therefore, a further
construction is needed. As in \cite{Original}, where we studied the system
\begin{equation}
\bigskip \left\{
\begin{array}{l}
-A\left( \int\limits_{\Omega }\left\vert \nabla u\right\vert ^{2}dx\right)
\triangle u= \lambda_1 f\left( v\right) +\mu_1 g\left( u\right) \text{ in }%
\Omega , \\
\\
-B\left( \int\limits_{\Omega }\left\vert \nabla v\right\vert ^{2}dx\right)
\triangle v=\lambda_2 h\left( u\right) +\mu_2 \left( x\right) l \left(
v\right) \text{ in }\Omega , \\
\\
u=v=0\text{ on }\partial \Omega ,%
\end{array}%
\right.
\end{equation}

Using a weak positive supersolution as first term of a constructed iterative
sequence $\left(u_n, v_n\right)$ in $H^1_0\left(\Omega\right)\times
H^1_0\left(\Omega\right)$, and a comparison principle introduced in \cite{1}%
, the authors established the convergence of this sequence to a positive
weak solution of the considered problem.

In this paper, we generalize the previous work in \cite{Original} by
considering variable parameters $\alpha, \beta, \gamma$ and $\eta$ in the
right hand side of (\ref{1.1}). We also give a better subsolution providing
easier computations.

\section{Existence result}

\begin{definition}
$\left( u,v\right) \in \left( H_{0}^{1}\left( \Omega \right) \times
H_{0}^{1}\left( \Omega \right) \right) ,$ is called a weak solution of (\ref%
{1.1}) if it satisfies
\end{definition}

\begin{equation*}
\begin{array}{c}
A\left( \int\limits_{\Omega }\left\vert \nabla u\right\vert ^{2}dx\right)
\int\limits_{\Omega }\nabla u\nabla \phi dx=\int\limits_{\Omega }\alpha
\left( x\right) f\left( v\right) \phi dx+\int\limits_{\Omega }\beta \left(
x\right)g\left( u\right) \phi \text{ }dx\text{ in }\Omega , \\
\\
B\left( \int\limits_{\Omega }\left\vert \nabla v\right\vert ^{2}dx\right)
\int\limits_{\Omega }\nabla v\nabla \psi dx=\int\limits_{\Omega }\gamma
\left( x\right) h\left( u\right) \psi dx+\int\limits_{\Omega }\eta \left(
x\right) l \left( v\right) \psi \text{ }dx\text{ in }\Omega%
\end{array}%
\end{equation*}

for all $\left( \phi ,\psi \right) \in \left( H_{0}^{1}\left( \Omega \right)
\times H_{0}^{1}\left( \Omega \right) \right) .$

\begin{definition}
Let $\left( \underline{u},\underline{v}\right) ,\left( \overline{u},%
\overline{v}\right) $ be a pair of nonnegative functions in $\left(
H_{0}^{1}\left( \Omega \right) \times H_{0}^{1}\left( \Omega \right) \right)
$, they are called positive weak subsolution and positive weak supersolution
(respectively) of (\ref{1.1}) if \ they satisfy the following
\end{definition}

\begin{equation*}
\begin{array}{c}
A\left( \int\limits_{\Omega }\left\vert \nabla \underline{u}\right\vert
^{2}dx\right) \int\limits_{\Omega }\nabla \underline{u}\nabla \phi dx\leq
\int\limits_{\Omega }\alpha \left( x\right) f\left( \underline{v}\right)
\phi \text{ }dx+\int\limits_{\Omega }\beta \left( x\right) g\left(
\underline{u}\right) \phi \text{ }dx \\
\\
B\left( \int\limits_{\Omega }\left\vert \nabla \underline{v}\right\vert
^{2}dx\right) \int\limits_{\Omega }\nabla \underline{v}\nabla \psi dx\leq
\int\limits_{\Omega }\gamma \left( x\right) h\left( \underline{u}\right)
\psi \text{ }dx+\text{ }\int\limits_{\Omega }\eta \left( x\right) l \left(
\underline{v}\right) \psi \text{ }dx%
\end{array}%
\end{equation*}

and

\begin{equation*}
\begin{array}{c}
A\left( \int\limits_{\Omega }\left\vert \nabla \overline{u}\right\vert
^{2}dx\right) \int\limits_{\Omega }\nabla \overline{u}\nabla \phi dx\geq
\int\limits_{\Omega }\alpha \left( x\right) f\left( \overline{v}\right) \phi
\text{ }dx+\int\limits_{\Omega }\beta \left( x\right) g\left( \overline{u}%
\right) \phi \text{ }dx \\
\\
B\left( \int\limits_{\Omega }\left\vert \nabla \overline{v}\right\vert
^{2}dx\right) \int\limits_{\Omega }\nabla \overline{v}\nabla \psi dx\geq
\int\limits_{\Omega }\gamma \left( x\right) h\left( \overline{u} \right)
\psi \text{ }dx\text{ }+\int\limits_{\Omega }\eta \left( x\right) l \left(
\overline{v}\right) \psi \text{ }dx%
\end{array}%
\end{equation*}
for all $\left( \phi ,\psi \right) \in \left( H_{0}^{1}\left( \Omega \right)
\times H_{0}^{1}\left( \Omega \right) \right)$, with $\phi \geq 0$ and $\psi
\geq 0$,\newline

and $\left(\underline{u},\underline{v}\right) ,\left( \overline{u},\overline{%
v}\right) = $ $\left( 0,0\right) $ on $\partial \Omega $.

\begin{lemma}
(Comparison principle \cite{1} ) Let $M:\mathbb{R}^{+}\rightarrow \mathbb{R}%
^{+}$ be a continuous nonincreasing function such that
\begin{equation}
M\left( s\right) >m_{0} > 0,\text{ for all }s\geq s_{0},  \label{2.1}
\end{equation}%
and $H\left( t\right) =tM\left( t^{2}\right)$ \ increasing on $\mathbb{R}%
^{+} $ .\newline
If $u_1,u_2$ are two non-negative functions verifying
\begin{equation}
\left\{
\begin{array}{l}
-M\left( \int\limits_{\Omega }\left\vert \nabla u_1\right\vert ^{2}dx\right)
\triangle u_1\geq -M\left( \int\limits_{\Omega }\left\vert \nabla
u_2\right\vert ^{2}dx\right) \triangle u_2\text{ in }\Omega , \\
\\
u=v=0\text{ on }\partial \Omega ,%
\end{array}%
\right.
\end{equation}

then $u_1\geq u_2$ $a.e.$ in $\Omega .$\newline
\label{2.2}
\end{lemma}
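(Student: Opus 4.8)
The plan is to absorb the constant coefficients $M(\|u_i\|^2)$ into the unknowns and thereby reduce the nonlocal inequality to an ordinary comparison for $-\Delta$. Set $t_i:=\left(\int_\Omega|\nabla u_i|^2\,dx\right)^{1/2}$ and $z_i:=M(t_i^2)\,u_i$ for $i=1,2$, where we regard $u_1,u_2\in H_0^1(\Omega)$ and read the differential inequality in the weak sense. Since $M$ is nonincreasing and $M>m_0$ on $[s_0,\infty)$, in fact $M>m_0>0$ on all of $\mathbb{R}^{+}$, so the $z_i$ are well-defined nonnegative elements of $H_0^1(\Omega)$ with $\|z_i\|=t_iM(t_i^2)=H(t_i)$, and the hypothesis says precisely that for every $\phi\in H_0^1(\Omega)$ with $\phi\ge0$,
\[
\int_\Omega\nabla z_1\cdot\nabla\phi\,dx\ \ge\ \int_\Omega\nabla z_2\cdot\nabla\phi\,dx .
\]

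First I would obtain $z_1\ge z_2$ a.e.\ by the weak maximum principle: taking $\phi=(z_2-z_1)^+\in H_0^1(\Omega)$ in the displayed inequality gives $\int_\Omega\nabla(z_1-z_2)\cdot\nabla(z_2-z_1)^+\,dx\ge0$, whereas the left-hand side equals $-\int_\Omega|\nabla(z_2-z_1)^+|^2\,dx\le0$; hence $\nabla(z_2-z_1)^+\equiv0$, and since $(z_2-z_1)^+\in H_0^1(\Omega)$ this forces $(z_2-z_1)^+=0$.

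The crucial step, which the direct test-function computation with $u_1,u_2$ does not by itself yield, is to prove $t_1\ge t_2$. Here I would use the energy identity
\[
\|z_1\|^2-\|z_2\|^2=\int_\Omega\nabla(z_1-z_2)\cdot\nabla(z_1+z_2)\,dx ,
\]
and apply the weak inequality above with the \emph{nonnegative} test function $\phi=z_1+z_2\in H_0^1(\Omega)$ to conclude that the right-hand side is $\ge0$. Therefore $H(t_1)=\|z_1\|\ge\|z_2\|=H(t_2)$, and since $H$ is increasing on $\mathbb{R}^{+}$ this gives $t_1\ge t_2$.

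Finally, $t_1\ge t_2$ and the monotonicity of $M$ give $0<M(t_1^2)\le M(t_2^2)$, so from $z_1\ge z_2\ge0$,
\[
u_1=\frac{z_1}{M(t_1^2)}\ \ge\ \frac{z_2}{M(t_1^2)}\ \ge\ \frac{z_2}{M(t_2^2)}=u_2\qquad\text{a.e. in }\Omega ,
\]
which is the assertion. I expect the only point requiring genuine care to be the passage between the "strong" form of the differential inequality in the statement and its weak formulation against nonnegative $H_0^1$ test functions, since that is exactly what makes both the maximum-principle step and, more importantly, the energy estimate giving $t_1\ge t_2$ legitimate; the remaining manipulations are routine.
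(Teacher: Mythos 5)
Your proof is correct, but there is nothing in the paper to measure it against: the lemma is stated as a quotation of the comparison principle of Alves and Corr\^{e}a \cite{1}, and no proof is reproduced here. Compared with the usual argument for this principle, which tests the inequality directly with $(u_2-u_1)^{+}$ and has to juggle the two nonlocal coefficients $M\left(\int_{\Omega}\vert\nabla u_i\vert^{2}dx\right)$ simultaneously, your substitution $z_i=M(t_i^{2})u_i$ decouples the two difficulties neatly: the weak maximum principle gives $z_1\geq z_2$, and testing with the nonnegative function $z_1+z_2$ gives $H(t_1)=\Vert\nabla z_1\Vert_{L^{2}}\geq\Vert\nabla z_2\Vert_{L^{2}}=H(t_2)$, hence $t_1\geq t_2$ and $M(t_1^{2})\leq M(t_2^{2})$, which is exactly what is needed to convert $z_1\geq z_2$ into $u_1\geq u_2$. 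You are also right that the pointwise ordering of the $z_i$ alone would not give the ordering of the gradient norms, so this second test is genuinely necessary and is the real content beyond the linear maximum principle. Two minor remarks: the implication $H(t_1)\geq H(t_2)\Rightarrow t_1\geq t_2$ uses that ``increasing'' means strictly increasing, which is the intended reading and is in fact indispensable for the lemma itself (if $H$ were allowed to be constant near the relevant values, say $M(s)=cs^{-1/2}$ there, then scalar multiples $u_1=aw$, $u_2=bw$ with $a<b$ of a fixed solution of $-\Delta w=1$ satisfy the differential inequality with equality while violating the conclusion); and your observation that monotonicity extends the bound $M>m_0$ from $[s_0,\infty)$ to all of $\mathbb{R}^{+}$ is correct and is what legitimizes the divisions by $M(t_i^{2})$.
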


Before stating and proving our main result, here are the conditions we need.
\newline

$\left( H1\right) $ $A,B: \mathbb{R}^{+}\rightarrow\mathbb{R}^{+}$ are two
continuous and increasing functions that satisfy the monotonicity conditions
of lemma \ref{2.2} so that we can use the Comparison principle, and assume
further that there exists $a_{i},b_{i} > 0, \ \ i=1,2,$

\begin{equation*}
a_{1}\leq A\left( t\right) \leq a_{2},\text{ \ \ \ }b_{1}\leq B\left(
t\right) \leq b_{2}\text{ \ for all }t\in
%TCIMACRO{\U{211d} }%
%BeginExpansion
\mathbb{R}
%EndExpansion
^{+}.
\end{equation*}

$\left( H2\right) $ $\alpha ,\beta ,\gamma ,\eta \in C\left( \overline{%
\Omega }\right) $ and
\begin{equation*}
\alpha \left( x\right) \geq \alpha _{0}>0, \ \beta \left( x\right) \geq
\beta _{0}>0, \ \gamma \left( x\right) \geq \gamma _{0}>0, \ \eta \left(
x\right) \geq \eta _{0}>0
\end{equation*}
for all $x\in \Omega .$

$\left( H3\right) $ $f,$ $g$, $h,$ and $l $ are continuous on $\left[
0,+\infty \right[ ,$ $C^{1}$ on $\left( 0,+\infty \right) ,$ and increasing
functions of infinite growth

\begin{equation*}
\lim_{t\rightarrow +\infty }f\left( t\right) =+\infty , \lim_{t\rightarrow
+\infty }l \left( t\right) =+\infty, \lim_{t\rightarrow +\infty }g\left(
t\right) =+\infty ,\text{ }\lim_{t\rightarrow +\infty }h\left( t\right)
=+\infty.
\end{equation*}

$\left( H4\right) $ For all $K> 0$
\begin{equation*}
\lim_{t\rightarrow +\infty }\frac{f\left( K\left( h\left( t\right) \right)
\right) }{t}=0.
\end{equation*}

$\left( H5\right) $%
\begin{equation*}
\lim_{t\rightarrow +\infty }\frac{g\left( t\right) }{t}=\lim_{t\rightarrow
+\infty }\frac{l \left( t\right) }{t}=0.
\end{equation*}

\begin{theorem}
For large values of $\alpha _{0}+\beta _{0}$ and $\gamma _{0}+\eta _{0}$,
system (\ref{1.1}) admits a large positive weak solution if conditions $%
\left( H1\right) -\left( H5\right) $ are satisfied.
\end{theorem}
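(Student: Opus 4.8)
\emph{Overall strategy and the supersolution.} The plan is to use the sub--supersolution method in its constructive form: as recalled in the introduction, for a system one cannot simply invoke an ordered pair of sub/supersolutions, so I will build a monotone iterative sequence starting from a supersolution and show it converges in $H_{0}^{1}(\Omega)\times H_{0}^{1}(\Omega)$ to a weak solution that lies above a subsolution. For the supersolution, let $e\in C^{2}(\overline{\Omega})$ solve $-\Delta e=1$ in $\Omega$, $e=0$ on $\partial\Omega$, put $E=\|e\|_{\infty}$, and look for $(\overline{u},\overline{v})=(Ce,De)$ with $C,D$ large. Since $-\Delta(Ce)=C$ and $a_{1}\le A\le a_{2}$, $b_{1}\le B\le b_{2}$, the weak supersolution inequalities will follow from the pointwise bounds $a_{1}C\ge\|\alpha\|_{\infty}f(DE)+\|\beta\|_{\infty}g(CE)$ and $b_{1}D\ge\|\gamma\|_{\infty}h(CE)+\|\eta\|_{\infty}l(DE)$. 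By (H5) one has $\|\beta\|_{\infty}g(CE)\le\frac{a_{1}}{2}C$ and $\|\eta\|_{\infty}l(DE)\le\frac{b_{1}}{2}D$ for all large $C,D$; choosing then $D:=\frac{2\|\gamma\|_{\infty}}{b_{1}}h(CE)$, which tends to $\infty$ with $C$ by (H3), settles the second inequality, while the first reduces to $\frac{a_{1}}{2}C\ge\|\alpha\|_{\infty}f\!\left(\tfrac{2\|\gamma\|_{\infty}E}{b_{1}}h(CE)\right)$, which holds for $C$ large by (H4) (divide by $CE$ and let $CE\to\infty$). This produces an arbitrarily large supersolution.

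\emph{The subsolution.} Next I will let $(\lambda_{1},\phi_{1})$ be the first Dirichlet eigenpair of $-\Delta$, $\phi_{1}>0$ in $\Omega$, $\|\phi_{1}\|_{\infty}=1$, and try $(\underline{u},\underline{v})=(c\phi_{1},d\phi_{1})$. Since $-\Delta(c\phi_{1})=c\lambda_{1}\phi_{1}$ and $A\le a_{2}$, $B\le b_{2}$, it will suffice to have, pointwise in $\Omega$, $a_{2}c\lambda_{1}\phi_{1}\le\alpha_{0}f(d\phi_{1})+\beta_{0}g(c\phi_{1})$ and $b_{2}d\lambda_{1}\phi_{1}\le\gamma_{0}h(c\phi_{1})+\eta_{0}l(d\phi_{1})$. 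Here the largeness of $\alpha_{0}+\beta_{0}$ and $\gamma_{0}+\eta_{0}$, together with the infinite growth of $f$ and $h$ in (H3), will let me pick $c,d$ large so that these hold in the interior (where $\phi_{1}\ge\mu>0$), while near $\partial\Omega$, where $\phi_{1}$ is small, they hold because the right-hand sides stay bounded away from $0$; this is the ``better subsolution'' announced in the introduction, which sidesteps the usual boundary-layer modification of $\phi_{1}$. Finally, enlarging $C,D$ if necessary, I arrange $\underline{u}\le\overline{u}$ and $\underline{v}\le\overline{v}$ in $\Omega$.

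\emph{Iteration and passage to the limit.} I will set $(u_{0},v_{0})=(\overline{u},\overline{v})$ and, given $(u_{n-1},v_{n-1})$, define $(u_{n},v_{n})\in H_{0}^{1}\times H_{0}^{1}$ as the solution of the decoupled nonlocal problems
$$-A\!\left(\int_{\Omega}|\nabla u_{n}|^{2}\right)\Delta u_{n}=\alpha f(v_{n-1})+\beta g(u_{n-1}),\qquad -B\!\left(\int_{\Omega}|\nabla v_{n}|^{2}\right)\Delta v_{n}=\gamma h(u_{n-1})+\eta l(v_{n-1}),$$
with zero boundary values. Each such problem is uniquely solvable: if $w$ solves $-\Delta w=\rho$ with $\rho\ge0$ the (bounded) right-hand side, then $u_{n}=\theta w$ where $\theta>0$ is the unique root of $H(\theta\|\nabla w\|)=\|\nabla w\|$, with $H(t)=tA(t^{2})$ a continuous strictly increasing bijection of $\mathbb{R}^{+}$ (and similarly for $B$). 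Using that $f,g,h,l$ are increasing together with the comparison principle (Lemma \ref{2.2}), an induction will give $\underline{u}\le u_{n}\le u_{n-1}\le\overline{u}$ and $\underline{v}\le v_{n}\le v_{n-1}\le\overline{v}$, so $u_{n}\downarrow u$ and $v_{n}\downarrow v$ a.e. Testing the $n$-th equations with $u_{n}$, resp. $v_{n}$, and using $A\ge a_{1}$, $B\ge b_{1}$ and the uniform $L^{\infty}$ bound on the right-hand sides will give uniform $H_{0}^{1}$ bounds; testing with $u_{n}-u$, resp. $v_{n}-v$, and invoking dominated convergence for the right-hand sides will upgrade this to strong convergence in $H_{0}^{1}$, whence $\int_{\Omega}|\nabla u_{n}|^{2}\to\int_{\Omega}|\nabla u|^{2}$ and $A\!\left(\int_{\Omega}|\nabla u_{n}|^{2}\right)\to A\!\left(\int_{\Omega}|\nabla u|^{2}\right)$ by continuity, and likewise for $v$. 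Passing to the limit in the weak formulation then shows that $(u,v)$ is a weak solution of (\ref{1.1}) with $u\ge\underline{u}>0$, $v\ge\underline{v}>0$, i.e.\ the desired large positive solution.

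\emph{Expected main obstacle.} The hard part will be the interlocking construction of the sub- and supersolutions: closing the two supersolution inequalities simultaneously forces one to balance the growth of $f$ against that of $h$, which is exactly what (H4) provides once (H5) has absorbed the diagonal terms, and, dually, producing a subsolution large enough to still lie under the supersolution is what requires the infinite growth (H3) and the largeness of $\alpha_{0}+\beta_{0}$, $\gamma_{0}+\eta_{0}$. The only other delicate point is the strong $H_{0}^{1}$ convergence of the iterates, without which the nonlocal Kirchhoff coefficients $A\!\left(\int_{\Omega}|\nabla u_{n}|^{2}\right)$, $B\!\left(\int_{\Omega}|\nabla v_{n}|^{2}\right)$ could not be passed to the limit.
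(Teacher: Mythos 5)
Your overall strategy coincides with the paper's: the supersolution $(\overline{u},\overline{v})=(Ce,De)$ with $D$ of the order of $h(CE)$ is, up to constants, exactly the paper's choice $\overline{u}=Ce$, $\overline{v}=\left(\Vert\gamma\Vert_{\infty}+\Vert\eta\Vert_{\infty}\right)h\left(C\Vert e\Vert_{\infty}\right)e$, closed with (H4)--(H5) in the same way; and the monotone iteration started at the supersolution, ordered by the comparison principle (Lemma 1), bounded in $H_{0}^{1}$ by testing with $u_{n},v_{n}$, and passed to the limit is the paper's scheme. Your remarks on the unique solvability of the frozen problems via $H(t)=tA(t^{2})$ and on upgrading to strong $H_{0}^{1}$ convergence are in fact more explicit than the paper's ``standard regularity argument''.

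The genuine gap is in your subsolution. You take $(\underline{u},\underline{v})=(c\phi_{1},d\phi_{1})$ and assert that near $\partial\Omega$ the required inequalities hold ``because the right-hand sides stay bounded away from $0$''. Hypotheses (H2)--(H5) do not give this: (H3) only makes $f,g,h,l$ continuous, increasing and unbounded, so $f(0)=g(0)=h(0)=l(0)=0$ is allowed (values $\le 0$ at $0$ are not even excluded). For example, take $f,g$ increasing and smooth with $f(t)=g(t)=t^{2}$ near $t=0$ and $f(t)=g(t)=\sqrt{t}$ for large $t$ (and $h,l$ similar); (H3)--(H5) hold, yet in a boundary strip the pointwise inequality $a_{2}c\lambda_{1}\phi_{1}\le\alpha_{0}f(d\phi_{1})+\beta_{0}g(c\phi_{1})$ fails for every fixed $c,d$, since the left side is of order $\phi_{1}$ while the right side is $O(\phi_{1}^{2})$; no largeness of $\alpha_{0}+\beta_{0}$ repairs a fixed pair $(c,d)$ there. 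This is precisely why the paper does not use $\phi_{1}$ itself but $\underline{u}=\frac{\alpha_{0}+\beta_{0}}{2a_{2}}\phi_{1}^{2}$, $\underline{v}=\frac{\gamma_{0}+\eta_{0}}{2b_{2}}\phi_{1}^{2}$: then $-\Delta\underline{u}$ is a multiple of $\sigma\phi_{1}^{2}-\left\vert\nabla\phi_{1}\right\vert^{2}$, which is negative near $\partial\Omega$ by Hopf's lemma, so the boundary region costs nothing, while on the compact subset of $\Omega$ where $\sigma\phi_{1}^{2}\ge\left\vert\nabla\phi_{1}\right\vert^{2}$ one has $\phi_{1}\ge\mu>0$, hence $\underline{u},\underline{v}$ become large as $\alpha_{0}+\beta_{0},\gamma_{0}+\eta_{0}\rightarrow\infty$ and $f(\underline{v}),g(\underline{u})$ dominate $S=\sup\{\sigma\phi_{1}^{2}-\left\vert\nabla\phi_{1}\right\vert^{2}\}$. (Note also that replacing $\alpha(x),\beta(x)$ by $\alpha_{0},\beta_{0}$ on the right is only legitimate once $f(\underline{v}),g(\underline{u})\ge 0$.) To fix your argument, either add an assumption such as $f(0),g(0),h(0),l(0)>0$, or adopt the $\phi_{1}^{2}$-type (or a classical boundary-layer) subsolution as the paper does; the rest of your proof then goes through.
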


\begin{proof}[Proof of Theorem 1]
Consider $\sigma $ the first eigenvalue of $\ -\triangle $ with Dirichlet
boundary conditions and $\phi _{1}$ the corresponding positive eigenfunction
with $\left\Vert \phi _{1}\right\Vert =1$ and $\phi _{1} \in C^{\infty}\left(%
\overline{\Omega}\right)$ (see \cite{Evans}).\newline

Let $S= \sup\limits_{x \in \Omega}\{\sigma \phi_1^{2}- |\nabla \phi_1|^{2}
\} $, then from growth conditions (H3)
\begin{equation*}
f\left(t\right) \geq S, \ g\left(t\right) \geq S, \ h\left(t\right) \geq S,
\ l\left(t\right) \geq S, \ \ \ \text{for } t \text { large enough}.
\end{equation*}

For each $\alpha _{0}+ \beta _{0}$ and $\gamma _{0}+ \eta _{0}$ large, let
us define
\begin{equation*}
\underline{u}= \frac{\alpha _{0}+ \beta_{0}}{2a_{2}}\phi _{1}^{2}
\end{equation*}%
and
\begin{equation*}
\underline{v}= \frac{ \gamma _{0}+\eta_{0}}{2b_{2}} \phi _{1}^{2},
\end{equation*}

where $a_{2},b_{2}$ are given by condition $\left( H1\right) .$ Let us show
that $\left( \underline{u},\underline{v}\right) $ is a subsolution of
problem (\ref{1.1}) for $\alpha _{0}+\beta _{0}$ and $\gamma _{0}+\eta _{0}$
large enough. Indeed, let $\phi \in H_{0}^{1}\left( \Omega \right) $ with $%
\phi \geq 0$ in $\Omega .$ By $\left( H1\right) -$ $\left( H3\right) ,$ we
get

\begin{eqnarray*}
A\left( \int\limits_{\Omega}\left\vert \nabla \underline{u}\right\vert
^{2}dx\right) \int\limits_{\Omega}\nabla \underline{u}.\nabla \phi dx
&=&A\left( \int\limits_{\Omega}\left\vert \nabla \underline{u}\right\vert
^{2}dx\right) \frac{ \alpha _{0}+\beta _{0} }{a_{2}}\int\limits_{\Omega}\phi
_{1}\nabla \phi _{1}.\nabla \phi dx \\
&& \\
&=&\frac{ \alpha _{0}+\beta _{0}}{a_{2}}A\left( \int\limits_{\Omega
}\left\vert \nabla \underline{u}\right\vert ^{2}dx\right) \times \\
&&\left\{ \int\limits_{\Omega }\nabla \phi _{1}\nabla \left( \phi _{1}.\phi
\right) dx-\int\limits_{\Omega}\left\vert \nabla \phi _{1}\right\vert
^{2}\phi dx\right\} \\
&& \\
&=&\frac{\alpha _{0}+\beta _{0}}{a_{2}}A\left( \int\limits_{\Omega
}\left\vert \nabla \underline{u}\right\vert ^{2}dx\right)
\int\limits_{\Omega }\left( \sigma \phi _{1}^{2}-\left\vert \nabla \phi
_{1}\right\vert ^{2}\right) \phi dx. \\
&& \\
&\leq & \left(\alpha _{0}+\beta _{0}\right) \int\limits_{\Omega }S \phi dx.
\\
&& \\
&\leq & \int\limits_{\Omega } \alpha\left(x\right)f \left(\underline{v}%
\right)\phi dx + \int\limits_{\Omega } \beta\left(x\right)g \left(\underline{%
u}\right)\phi dx
\end{eqnarray*}

for $\alpha _{0}+\beta _{0}>0$ large enough, and all $\phi \in
H_{0}^{1}\left( \Omega \right) $ with $\phi \geq 0$ in $\Omega .$

Similarly,

\begin{equation*}
B\left( \int\limits_{\Omega }\left\vert \nabla \underline{v}\right\vert
^{2}dx\right) \int\limits_{\Omega }\nabla \underline{v}\nabla \psi dx\leq
\int\limits_{\Omega }\gamma \left( x\right) h\left( \underline{u}\right)
\psi dx\text{ }+\int\limits_{\Omega }\eta \left( x\right) \l \left(
\underline{v}\right) \psi dx\text{ in }\Omega
\end{equation*}

for $\gamma _{0}+\eta _{0}>0$ large enough and all $\psi \in H_{0}^{1}\left(
\Omega \right) $ with $\psi \geq 0$ in $\Omega$.\newline

Also notice that $\underline{u}>0$ and $\underline{v}>0$ in $\Omega $,
\newline
$\underline{u}\rightarrow +\infty $ and $\underline{v}\rightarrow +\infty $
as $\alpha _{0}+\beta _{0}\rightarrow +\infty $ and $\gamma _{0}+\eta
_{0}\rightarrow +\infty $ .\newline

For the supersolution part, consider $e $ the solution of the following
problem

\begin{equation}
\left\{
\begin{array}{c}
-\triangle e=1\text{ in }\Omega , \\
\\
e=0\text{ on }\partial \Omega .%
\end{array}%
\right.  \label{2.10}
\end{equation}

We give the supersolution of problem (\ref{1.1}) by
\begin{equation*}
\overline{u}=Ce,\ \ \text{ }\overline{v}= \left(\Vert \gamma \Vert{_\infty }%
+ \Vert \eta \Vert_{\infty }\right) h\left( C\left\Vert e\right\Vert
_{\infty }\right) e,
\end{equation*}

where $C>0$ is a large positive real number to be given later. \newline
Indeed, for all $\phi \in H_{0}^{1}\left( \Omega \right) $ with $\phi \geq 0$
in $\Omega$, we get from (\ref{2.10}) and the condition $\left( H1\right) $

\begin{eqnarray*}
A\left( \int\limits_{\Omega }\left\vert \nabla \overline{u}\right\vert
^{2}dx\right) \int\limits_{\Omega }\nabla \overline{u}.\nabla \phi dx
&=&A\left( \int\limits_{\Omega }\left\vert \nabla \overline{u}\right\vert
^{2}dx\right) C\int\limits_{\Omega }\nabla e .\nabla \phi dx \\
&& \\
&=&A\left( \int\limits_{\Omega }\left\vert \nabla \overline{u}\right\vert
^{2}dx\right) C\int\limits_{\Omega }\phi dx \\
&& \\
&\geq &a_{1}C\int\limits_{\Omega }\phi dx,
\end{eqnarray*}

By $\left( H4\right) $ and $\left( H5\right) $, we can choose $C$ large
enough so that

\begin{equation*}
a_{1}C\geq \left\Vert \alpha \right\Vert _{\infty }f\left[ \left(\Vert
\gamma \Vert{_\infty }+ \Vert \eta \Vert_{\infty }\right) h\left(C\left\Vert
e\right\Vert _{\infty }\right) \left\Vert e\right\Vert _{\infty }\right]
+\left\Vert \beta \right\Vert _{\infty }g\left( C\left\Vert e\right\Vert
_{\infty }\right) .
\end{equation*}

Therefore,

\begin{equation}
\begin{array}{l}
A\left( \int\limits_{\Omega }\left\vert \nabla \overline{u}\right\vert
^{2}dx\right) \int\limits_{\Omega }\nabla \overline{u}.\nabla \phi dx \\
\\
\geq \left[ \left\Vert \alpha \right\Vert _{\infty }f\left[ \left(\Vert
\gamma \Vert{_\infty }+ \Vert \eta \Vert_{\infty }\right) h\left(C\left\Vert
e\right\Vert _{\infty }\right) \left\Vert e\right\Vert _{\infty }\right]
+\left\Vert \beta \right\Vert _{\infty }g\left( C\left\Vert e\right\Vert
_{\infty }\right) \right] \int\limits_{\Omega }\phi dx \\
\\
\geq \left\Vert \alpha \right\Vert _{\infty }\int\limits_{\Omega }f\left[
\left(\Vert \gamma \Vert{_\infty }+ \Vert \eta \Vert_{\infty }\right)
h\left(C\left\Vert e\right\Vert _{\infty }\right) \left\Vert e\right\Vert
_{\infty }\right] \phi dx+ \left\Vert \beta \right\Vert _{\infty }
\int\limits_{\Omega }g\left( C\left\Vert e\right\Vert _{\infty }\right) \phi
dx \\
\\
\geq\int\limits_{\Omega }\alpha \left( x\right) f\left( \overline{v}\right)
\phi dx+\int\limits_{\Omega }\beta \left( x\right) g\left( \overline{u}%
\right) \phi dx.%
\end{array}
\label{2.11}
\end{equation}

Also,%
\begin{equation}
\begin{array}{l}
B\left( \int\limits_{\Omega }\left\vert \nabla \overline{v}\right\vert
^{2}dx\right) \int\limits_{\Omega }\nabla \overline{v}\nabla \psi dx =
\left( \left\Vert \gamma \right\Vert _{\infty }+\left\Vert \eta \right\Vert
_{\infty }\right) \int\limits_{\Omega }h\left( C\left\Vert e\right\Vert
_{\infty }\right) \psi dx \\
\\
\hspace{4 cm} \geq \int\limits_{\Omega }\gamma \left( x\right) h\left(
\overline{u}\right) \psi dx+\int\limits_{\Omega }\eta \left( x\right)
h\left( C\left\Vert e\right\Vert _{\infty }\right) \psi dx.%
\end{array}
\label{2.12}
\end{equation}

Using $\left( H4\right) $ and $\left( H5\right) $ again for $C$ large enough
we get

\begin{equation}
h\left( C\left\Vert e\right\Vert _{\infty }\right) \geq l \left[ \left(
\left\Vert \gamma \right\Vert _{\infty }+\left\Vert \eta \right\Vert
_{\infty }\right) h\left( C\left\Vert e\right\Vert _{\infty }\right)
\left\Vert e\right\Vert _{\infty }\right] \geq l \left( \overline{v}\right) .
\label{2.13}
\end{equation}

Combining (\ref{2.12}) and (\ref{2.13}), we obtain
\begin{equation}
B\left( \int\limits_{\Omega }\left\vert \nabla \overline{v}\right\vert
^{2}dx\right) \int\limits_{\Omega }\nabla \overline{v}\nabla \psi dx\geq
\int\limits_{\Omega }\gamma \left( x\right) h\left( \overline{u}\right) \psi
dx+\int\limits_{\Omega }\eta \left( x\right) l \left( \overline{v}\right)
\psi dx.  \label{2.14}
\end{equation}

By (\ref{2.11}) and (\ref{2.14}) we conclude that $\left( \overline{u},%
\overline{v}\right) $ is a supersolution of problem (\ref{1.1}).\newline
Furthermore, \underline{$u$} $\leq \overline{u}$ and \underline{$v$} $\leq
\overline{v}$ for $C$ \ chosen large enough.\newline

Now, we use a similar argument to \cite{Original} in order to obtain a weak
solution of our problem. Consider the following sequence $\left\{ \left(
u_{n},v_{n}\right) \right\} \subset \left( H_{0}^{1}\left( \Omega \right)
\times H_{0}^{1}\left( \Omega \right) \right) $ where: $u_{0}:=\overline{u}%
,v_{0}=\overline{v}$ and $\left( u_{n},v_{n}\right) $ is the unique solution
of

\begin{equation}
\left\{
\begin{array}{l}
-A\left( \int\limits_{\Omega }\left\vert \nabla u_{n}\right\vert
^{2}dx\right) \triangle u_{n}=\alpha \left( x\right) f\left( v_{n-1}\right)
+\beta \left( x\right) g\left( u_{n-1}\right) \text{ in }\Omega , \\
\\
-B\left( \int\limits_{\Omega }\left\vert \nabla v_{n}\right\vert
^{2}dx\right) \triangle v_{n}=\gamma \left( x\right) h\left( u_{n-1}\right)
+\eta \left( x\right) l \left( v_{n-1}\right) \text{ in }\Omega , \\
\\
u_{n}=v_{n}=0\text{ on }\partial \Omega .%
\end{array}%
\right.  \label{2.15}
\end{equation}

Since $A$ and $B$ satisfy $\left(H1\right)$ and $\alpha \left(x\right)
f\left( v_{n-1}\right) ,$ $\beta \left(x\right)g\left( u_{n-1}\right) ,
\gamma \left(x\right)h\left( u_{n-1}\right) ,$ and $\eta
\left(x\right)l\left( v_{n-1}\right) \in L^{2}\left( \Omega \right) $ $%
\left( \text{in }x\right) ,$

we deduce from a result in \cite{1} that system (\ref{2.15}) has a unique
solution $\left( u_{n},v_{n}\right) \in \left( H_{0}^{1}\left( \Omega
\right) \times H_{0}^{1}\left( \Omega \right) \right) .$

Using (\ref{2.15}) and the fact that $\left( u_{0},v_{0}\right) $ is a
supersolution of (\ref{1.1}), we get

\begin{equation*}
\left\{
\begin{array}{c}
-A\left( \int\limits_{\Omega }\left\vert \nabla u_{0}\right\vert
^{2}dx\right) \triangle u_{0}\geq \alpha \left( x\right) f\left(
v_{0}\right) +\text{ }\beta \left( x\right) g\left( u_{0}\right) =-A\left(
\int\limits_{\Omega }\left\vert \nabla u_{1}\right\vert ^{2}dx\right)
\triangle u_{1}, \\
\\
-B\left( \int\limits_{\Omega }\left\vert \nabla v_{0}\right\vert
^{2}dx\right) \triangle v_{0}\geq \gamma \left( x\right) h\left(
u_{0}\right) +\eta \left( x\right) l \left( v_{0}\right) \text{ }=-B\left(
\int\limits_{\Omega }\left\vert \nabla v_{1}\right\vert dx\right) \triangle
v_{1}%
\end{array}%
\right.
\end{equation*}

then by Lemma 1$,$ $u_{0}\geq u_{1}$ and $v_{0}\geq v_{1}.$ Also, since $%
u_{0}\geq $ $\underline{u}$, $v_{0}\geq $ $\underline{v}$ and the
monotonicity of $\ f,$ $g,$ $h,$ and $l $ one has

\begin{eqnarray*}
-A\left( \int\limits_{\Omega }\left\vert \nabla u_{1}\right\vert
^{2}dx\right) \triangle u_{1} &=&\alpha \left( x\right) f\left( v_{0}\right)
+\text{ }\beta \left( x\right) g\left( u_{0}\right) \\
&& \\
&\geq &\alpha \left( x\right) f\left( \underline{v}\right) +\text{ }\beta
\left( x\right) g\left( \underline{u}\right) \geq -A\left(
\int\limits_{\Omega }\left\vert \nabla \underline{u}\right\vert
^{2}dx\right) \triangle \underline{u}, \\
&& \\
-B\left( \int\limits_{\Omega }\left\vert \nabla v_{1}\right\vert
^{2}dx\right) \triangle v_{1} &=& \gamma \left( x\right) h\left(
u_{0}\right) + \eta \left( x\right) l \left( v_{0}\right) \\
&\geq & \gamma \left( x\right) h\left( \underline{u}\right) +\eta \left(
x\right) l \left( \underline{v}\right) \geq -B\left( \int\limits_{\Omega
}\left\vert \nabla \underline{v}\right\vert ^{2}dx\right) \triangle
\underline{v}
\end{eqnarray*}
according to Lemma 1 again, we obtain $u_{1}\geq \underline{u},$ $v_{1}\geq
\underline{v}.$ \newline

Repeating the same argument for $u_{2},v_{2}$, observe that
\begin{eqnarray*}
-A\left( \int\limits_{\Omega }\left\vert \nabla u_{1}\right\vert
^{2}dx\right) \triangle u_{1} &=& \alpha \left( x\right) f\left(
v_{0}\right) + \beta \left( x\right) g\left( u_{0}\right) \\
&\geq & \alpha \left( x\right) f\left( v_{1}\right) + \beta \left( x\right)
g\left( u_{1}\right) =-A\left( \int\limits_{\Omega }\left\vert \nabla
u_{2}\right\vert ^{2}dx\right) \triangle u_{2}, \\
-B\left( \int\limits_{\Omega }\left\vert \nabla v_{1}\right\vert dx\right)
\triangle v_{1} &=& \gamma \left( x\right) h\left( u_{0}\right) + \eta
\left( x\right) l \left( v_{0}\right) \\
&\geq & \gamma \left( x\right) h\left( u_{1}\right) + \eta \left( x\right) l
\left( v_{1}\right) =-B\left( \int\limits_{\Omega }\left\vert \nabla
v_{2}\right\vert ^{2}dx\right) \triangle v_{2},
\end{eqnarray*}

then $u_{1}\geq u_{2},$ $v_{1}\geq v_{2}.$ Similarly, we get $u_{2}\geq
\underline{u}$ and $v_{2}\geq \underline{v}$ from

\begin{eqnarray*}
-A\left( \int\limits_{\Omega }\left\vert \nabla u_{2}\right\vert
^{2}dx\right) \triangle u_{2} &=& \alpha \left( x\right) f\left(
v_{1}\right) + \beta \left( x\right) g\left( u_{1}\right) \\
&\geq & \alpha \left( x\right) f\left( \underline{v}\right) + \beta \left(
x\right) g\left( \underline{u}\right) \geq -A\left( \int\limits_{\Omega
}\left\vert \nabla \underline{u}\right\vert ^{2}dx\right) \triangle
\underline{u}, \\
&& \\
-B\left( \int\limits_{\Omega }\left\vert \nabla v_{2}\right\vert
^{2}dx\right) \triangle v_{2} &=& \gamma \left( x\right) h\left(
u_{1}\right) + \eta \left( x\right) l \left( v_{1}\right) \\
&\geq & \gamma \left( x\right) h\left( \underline{u}\right) + \eta \left(
x\right) l \left( \underline{v}\right) \geq -B\left( \int\limits_{\Omega
}\left\vert \nabla \underline{v}\right\vert ^{2}dx\right) \triangle
\underline{v}.
\end{eqnarray*}

By repeating these implementations we construct a bounded decreasing
sequence $\left\{ \left( u_{n},v_{n}\right) \right\} \subset \left(
H_{0}^{1}\left( \Omega \right) \times H_{0}^{1}\left( \Omega \right) \right)
$ verifying
\begin{equation}
\overline{u}=u_{0}\geq u_{1}\geq u_{2}\geq ...\geq u_{n}\geq ...\geq
\underline{u}>0,  \label{2.16}
\end{equation}

\begin{equation}
\overline{v}=v_{0}\geq v_{1}\geq v_{2}\geq ...\geq v_{n}\geq ...\geq
\underline{v}>0.  \label{2.17}
\end{equation}

By continuity of functions $f,g,$ $h,$ and $l $ and the definition of the
sequences $\left( u_{n}\right) $ and $\left( v_{n}\right) ,$ there exist
positive constants $C_{i}>0,$ $i=1,...,4$ such that%
\begin{equation}
\left\vert f\left( v_{n-1}\right) \right\vert \leq C_{1},\ \text{\ \ }%
\left\vert g\left( u_{n-1}\right) \right\vert \text{ }\leq C_{2},\
\left\vert h\left( u_{n-1}\right) \right\vert \leq C_{3}  \label{2.18}
\end{equation}

and
\begin{equation*}
\left\vert l \left( u_{n-1}\right) \right\vert \text{\ }\leq C_{4}\text{ for
all }n.
\end{equation*}

From (\ref{2.18}), multiplying the first equation of (\ref{2.15}) by $u_{n}$%
, integrating, using Holder inequality and Sobolev embedding we check that

\begin{eqnarray*}
a_{1}\int\limits_{\Omega }\left\vert \nabla u_{n}\right\vert ^{2}dx &\leq
&A\left( \int\limits_{\Omega }\left\vert \nabla u_{n}\right\vert
^{2}dx\right) \int\limits_{\Omega }\left\vert \nabla u_{n}\right\vert ^{2}dx
\\
&& \\
&=& \int\limits_{\Omega }\alpha \left( x\right) f\left( v_{n-1}\right)
u_{n}dx+ \int\limits_{\Omega }\beta \left( x\right) g\left( u_{n-1}\right)
u_{n}dx \\
&& \\
&\leq & \left\Vert \alpha \right\Vert _{\infty }\int\limits_{\Omega
}\left\vert f\left( v_{n-1}\right) \right\vert \text{ }\left\vert
u_{n}\right\vert dx+ \left\Vert \beta \right\Vert _{\infty
}\int\limits_{\Omega }\left\vert g\left( u_{n-1}\right) \right\vert \text{ }%
\left\vert u_{n}\right\vert dx \\
&& \\
&\leq &C_{1} \int\limits_{\Omega }\text{ }\left\vert u_{n}\right\vert
dx+C_{2} \int\limits_{\Omega }\left\vert u_{n}\right\vert dx \\
&& \\
&\leq &C_{5}\left\Vert u_{n}\right\Vert _{H_{0}^{1}\left( \Omega \right) }
\end{eqnarray*}

or
\begin{equation}
\left\Vert u_{n}\right\Vert _{H_{0}^{1}\left( \Omega \right) }\leq C_{5},\
\forall n,  \label{2.19}
\end{equation}

where $C_{5}>0$ is a constant independent of $n.$ Similarly, there exist $%
C_{6}>0$ independent of $n$ such that%
\begin{equation}
\left\Vert v_{n}\right\Vert _{H_{0}^{1}\left( \Omega \right) }\leq C_{6},\ \
\forall n.  \label{2.20}
\end{equation}

From (\ref{2.19}) and (\ref{2.20}), we deduce that $\left\{ \left(
u_{n},v_{n}\right) \right\} $ admits a weakly converging subsequence in $%
H_{0}^{1}\left( \Omega ,%
%TCIMACRO{\U{211d} }%
%BeginExpansion
\mathbb{R}
%EndExpansion
^{2}\right) $ to a limit $\left( u,v\right) $ satisfying $u\geq $ $%
\underline{u}$ $>0$ and $v\geq $ \underline{$v$} $>0.$ Being monotone and
also using a standard regularity argument, $\left\{ \left(
u_{n},v_{n}\right) \right\} $ converges itself to $\left( u,v\right) .$ Now,
letting $n\rightarrow +\infty $ in (2.15), we conclude that $\left(
u,v\right) $ is a positive weak solution of system (\ref{1.1}).
\end{proof}

\bigskip

\begin{acknowledgement}
For any decision, the authors are grateful to the anonymous referees for the
careful reading and their important observations/suggestions for sake of
improving this paper. In memory of Mr. Mahmoud ben Mouha Boulaaras
(1910--1999).
\end{acknowledgement}

\ \ \ \ \ \ \ \ \ \ \ \ \ \

\end{document}